\makeatletter \@namedef{subjclassname@2010}{
  \textup{2020} Mathematics Subject Classification}
\newtheorem{thm}{Theorem}[section]
\newtheorem{cor}[thm]{Corollary}
\newtheorem{lem}[thm]{Lemma}
\newtheorem{pro}[thm]{Proposition}
\theoremstyle{remark}
\newtheorem*{rema}{Remark}
\theoremstyle{definition}
\newtheorem{exa}[thm]{\textbf{Example}}
\newcommand{\R}{\mathbb{R}}
\newcommand{\N}{\mathbb{N}}
\newcommand{\C}{\mathbb{C}}
\begin{document}

\title[Commutativity of symmetric operators]{On the commutativity of closed symmetric operators}
\author[S. Dehimi, M. H. Mortad and A. Bachir]{Souheyb Dehimi, Mohammed Hichem Mortad$^*$ and Ahmed Bachir}

\thanks{* Corresponding author.}
\date{}
\keywords{Normal operator; Closed operator; Symmetric operators;
Self-adjoint operators; positive operator; subnormal and hyponormal
operators; Fuglede-Putnam theorem; Hilbert space; Commutativity}

\subjclass[2010]{Primary 47B25. Secondary 47B15, 47A08.}

\address{(The first author) Department of Mathematics, Faculty of Mathematics and Informatics,
University of Mohamed El Bachir El Ibrahimi, Bordj Bou Arréridj,
El-Anasser 34030, Algeria.}

\email{souheyb.dehimi@univ-bba.dz, sohayb20091@gmail.com}

\address{(The corresponding author) Department of
Mathematics, University of Oran 1, Ahmed Ben Bella, B.P. 1524, El
Menouar, Oran 31000, Algeria.}

\email{mhmortad@gmail.com, mortad.hichem@univ-oran1.dz.}

\address{(The third author) Department of Mathematics, College of Science, King Khalid University, Abha, Saudi
Arabia.}

\email{abishr@kku.edu.sa, bachir\_ahmed@hotmail.com}

\begin{abstract}In this article, we give conditions guaranteeing the
commutativity of a bounded self-adjoint operator with an unbounded
closed symmetric operator.
\end{abstract}

\maketitle

\section{Essential background}

All operators considered here are linear but not necessarily
bounded. If an operator is bounded and everywhere defined, then it
belongs to $B(H)$ which is the algebra of all bounded linear
operators on $H$ (see \cite{Mortad-Oper-TH-BOOK-WSPC} for its
fundamental properties).

Most unbounded operators that we encounter are defined on a subspace
(called domain) of a Hilbert space. If the domain is dense, then we
say that the operator is densely defined. In such case, the adjoint
exists and is unique.

Let us recall a few basic definitions about non-necessarily bounded
operators. If $S$ and $T$ are two linear operators with domains
$D(S)$ and $D(T)$ respectively, then $T$ is said to be an extension
of $S$, written as $S\subset T$, if $D(S)\subset D(T)$ and $S$ and
$T$ coincide on $D(S)$.

An operator $T$ is called closed if its graph is closed in $H\oplus
H$. It is called closable if it has a closed extension. The smallest
closed extension of it is called its closure and it is denoted by
$\overline{T}$ (a standard result states that a densely defined $T$
is closable iff $T^*$ has a dense domain, and in which case
$\overline{T}=T^{**}$). If $T$ is closable, then
\[S\subset T\Rightarrow \overline{S}\subset
\overline{T}.\] If $T$ is densely defined, we say that $T$ is
self-adjoint when $T=T^*$; symmetric if $T\subset T^*$; normal if
$T$ is \textit{closed} and $TT^*=T^*T$.

The product $ST$ and the sum $S+T$ of two operators $S$ and $T$ are
defined in the usual fashion on the natural domains:

\[D(ST)=\{x\in D(T):~Tx\in D(S)\}\]
and
\[D(S+T)=D(S)\cap D(T).\]

In the event that $S$, $T$ and $ST$ are densely defined, then
\[T^*S^*\subset (ST)^*,\]
with the equality occurring when $S\in B(H)$. If $S+T$ is densely
defined, then
\[S^*+T^*\subset (S+T)^*\]
with the equality occurring when $S\in B(H)$.

Let $T$ be a linear operator (possibly unbounded) with domain $D(T)$
and let $B\in B(H)$. Say that $B$ commutes with $T$ if
\[BT\subset TB.\]
In other words, this means that $D(T)\subset D(TB)$ and
\[BTx=TBx,~\forall x\in D(T).\]

If a symmetric operator $T$ is such that $\langle Tx,x\rangle\geq0$
for all $x\in D(T)$, then we say that $T$ is positive, and we write
$T\geq0$. When $T$ is self-adjoint and $T\geq0$, then we can define
its unique positive self-adjoint square root, which we denote by
$\sqrt T$.

If $T$ is densely defined and closed, then $T^*T$ (and $TT^*$) is
self-adjoint and positive (a celebrated result due to von-Neumann,
see e.g. \cite{SCHMUDG-book-2012}). So, when $T$ is closed then
$T^*T$ is self-adjoint and positive whereby it is legitimate to
define its square root. The unique positive self-adjoint square root
of $T^*T$ is denoted by $|T|$. It is customary to call it the
absolute value or modulus of $T$. If $T$ is closed, then (see e.g.
Lemma 7.1 in \cite{SCHMUDG-book-2012})
\[D(T)=D(|T|)\text{ and } \|Tx\|=\||T|x\|,~\forall x\in D(T).\]

Next, we recall some definitions of unbounded non-normal operators.
A densely defined operator $A$ with domain $D(A)$ is called
hyponormal if
\[D(A)\subset D(A^*)\text{ and } \|A^*x\|\leq\|Ax\|,~\forall x\in D(A).\]

A densely defined linear operator $A$ with domain $D(A)\subset H$,
is said to be subnormal when there are a Hilbert space $K$ with
$H\subset K$, and a normal operator $N$ with $D(N)\subset K$ such
that
\[D(A)\subset D(N)\text{ and } Ax=Nx \text{ for all } x\in D(A).\]

\section{Some applications to the commutativity of self-adjoint operators}

In \cite{Alb-Spain}, \cite{Dehimi-Mortad-INVERT},
\cite{Gustafson-Mortad-I}, \cite{Gustafson-Mortad-II},
\cite{Jiang-product-hypoormal}, \cite{Jung-Mortad-Stochel},
\cite{MHM1}, \cite{Mortad-Thesis-Edinburgh-2003}, \cite{MHM7},
\cite{Mortad-FUG-PUT SURVEY BOOK}, \cite{Mortad-cex-BOOK}, and
\cite{Reh}, the question of the self-adjointness of the normal
product of two self-adjoint operators was tackled in different
settings (cf. \cite{Benali-Mortad}). In all cases, the commutativity
of the operators was reached.

Here, we deal with the similar question where the unbounded
(operator) factor is closed and symmetric which, and it is known, is
weaker than self-adjointness. More precisely, we show that if $B\in
B(H)$ is self-adjoint and $A$ is densely defined, closed and
symmetric, then $BA\subset AB$ given that $AB$ or $BA$ is e.g.
normal.

As in e.g. \cite{MHM1}, the use of the Fuglede-Putnam theorem is
primordial but just in the beginning of the proof. The desired
conclusions do not come as straightforwardly as when the operator
was self-adjoint. Thankfully, these technical difficulties have been
overcome.

Now, recall the well-known Fuglede-Putnam theorem:

\begin{thm}\label{Fug-Put UNBD A BD}(\cite{FUG}, \cite{PUT}) If $A\in B(H)$ and if $M$
and $N$ are normal (non-necessarily bounded) operators, then
\[AN\subset MA\Longrightarrow AN^*\subset M^*A.\]
\end{thm}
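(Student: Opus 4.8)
The plan is to bootstrap the unbounded statement from the classical bounded Fuglede--Putnam theorem (the case $M,N\in B(H)$ of \cite{FUG}, \cite{PUT}), which I take as the available engine; if one wants it self-contained it is provable by Rosenblum's argument, since $AN=MA$ gives $Ae^{wN}=e^{wM}A$ for all $w\in\C$, whence $A=e^{-\bar zM}Ae^{\bar zN}$ and the entire $B(H)$-valued function $z\mapsto e^{zM^*}Ae^{-zN^*}$ equals $e^{zM^*-\bar zM}Ae^{\bar zN-zN^*}$, which is bounded in norm by $\|A\|$ because both exponents are skew-adjoint, so Liouville forces it to be constant and the vanishing of its derivative at $z=0$ yields $M^*A=AN^*$. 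A convenient preliminary simplification is the $2\times2$ matrix trick: replacing $H$ by $H\oplus H$, the operator $A$ by the nilpotent $\bigl(\begin{smallmatrix}0&A\\0&0\end{smallmatrix}\bigr)$, and the pair $(M,N)$ by the single normal operator $M\oplus N$, one converts $AN\subset MA$ into a commutation relation, so it suffices to treat Fuglede's case $M=N$.

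The core of the argument is a reduction to bounded operators through resolvents. I would fix a point $z$ in the resolvent sets of both $M$ and $N$, so that $(N-z)^{-1},(M-z)^{-1}\in B(H)$ and, being Borel functions of the normal operators, are themselves normal. Feeding an arbitrary $y\in H$ through $x=(N-z)^{-1}y\in D(N)$, the inclusion $AN\subset MA$ gives $Ax\in D(M)$ and $ANx=MAx$; rewriting $Nx=y+z(N-z)^{-1}y$ and applying $(M-z)^{-1}$ produces the clean bounded identity
\[A(N-z)^{-1}=(M-z)^{-1}A.\]
Now the classical bounded theorem applies verbatim to $A$ and the bounded normals $(N-z)^{-1},(M-z)^{-1}$, giving $A[(N-z)^{-1}]^*=[(M-z)^{-1}]^*A$, that is,
\[A(N^*-\bar z)^{-1}=(M^*-\bar z)^{-1}A,\]
since $[(N-z)^{-1}]^*=(N^*-\bar z)^{-1}$ and likewise for $M$.

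Finally I would run the resolvent computation backwards. Given $x\in D(N^*)$, writing $x=(N^*-\bar z)^{-1}y$ with $y=(N^*-\bar z)x$ and invoking the last identity shows first that $Ax=(M^*-\bar z)^{-1}Ay\in\ran(M^*-\bar z)^{-1}=D(M^*)$, and then, after applying $M^*-\bar z$, that $M^*Ax=AN^*x$. As $x\in D(N^*)$ was arbitrary, this is precisely $AN^*\subset M^*A$, the desired conclusion.

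The step I expect to be the genuine obstacle is the very first one: a normal operator need not have nonempty resolvent set, so a common $z\in\rho(M)\cap\rho(N)$ may fail to exist. For instance, multiplication by the coordinate function on $L^2(\C,\mu)$ with $\mu$ of full support (say Gaussian) is normal with spectrum all of $\C$. The remedy is to carry out the same two-step reduction but through the spectral theorem rather than resolvents: replace $(N-z)^{-1}$ by a globally defined bounded normal function of $N$ from the functional calculus, such as the bounded transform $N(I+N^*N)^{-1/2}$ or the elements $e^{i(s\,\REAL N+t\,\Ima N)}$ of the two-parameter unitary group generated by the strongly commuting self-adjoint parts $\REAL N,\Ima N$, establish the corresponding bounded intertwining, apply bounded Fuglede--Putnam there, and reconstruct $N^*,M^*$ by inverting the transform or differentiating the group. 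The spectral apparatus needed for this is exactly what is developed in \cite{SCHMUDG-book-2012}.
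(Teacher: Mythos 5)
The paper does not prove this theorem at all---it is recalled from \cite{FUG} and \cite{PUT}---so your attempt must be measured against the classical arguments. Up to the point you yourself flag, it is correct: the Berberian matrix trick (passing to $H\oplus H$, $\bigl(\begin{smallmatrix}0&A\\0&0\end{smallmatrix}\bigr)$ and $M\oplus N$) validly reduces Putnam to Fuglede, and the resolvent bootstrap---$A(N-z)^{-1}=(M-z)^{-1}A$, bounded Fuglede--Putnam for the normal resolvents, then running the computation backwards through $[(N-z)^{-1}]^*=(N^*-\bar z)^{-1}$---is a complete and correct proof \emph{whenever} $\rho(M)\cap\rho(N)\neq\emptyset$. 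But, as you correctly observe, an unbounded normal operator can have spectrum all of $\C$, so this hypothesis can fail, and at that point your proof stops being a proof. The proposed remedy is precisely where the genuine gap lies: ``establish the corresponding bounded intertwining'' for the bounded transform $N(I+N^*N)^{-1/2}$ or for the groups $e^{i(s\,\REAL N+t\,\Ima N)}$ is the entire difficulty, and as stated it is circular. Indeed $\REAL N=\tfrac12(N+N^*)$, $\Ima N$, and $(I+N^*N)^{-1/2}$ are all manufactured from $N^*$, and the only handle you have on how $A$ interacts with $N^*$ is the conclusion $AN^*\subset M^*A$ you are trying to prove. Unlike the resolvent identity, which follows from $AN\subset MA$ by a one-line computation, there is no routine derivation of $A\,N(I+N^*N)^{-1/2}=M(I+M^*M)^{-1/2}A$ or of $Ae^{i(s\,\REAL N+t\,\Ima N)}=e^{i(s\,\REAL M+t\,\Ima M)}A$ from the hypothesis alone.

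The standard way to close this gap is spectral truncation rather than a global transform. Let $E$ and $F$ be the spectral measures of $N$ and $M$, and put $\Delta_n=\{z\in\C:|z|\leq n\}$, $H_n=\ran E(\Delta_n)$. For $x\in H_n$ one iterates $AN\subset MA$ to get $AN^kx=M^kAx$ for every $k$, whence $\|M^kAx\|\leq\|A\|\,n^k\|x\|$; for a normal $M$ this growth bound forces $Ax\in\ran F(\Delta_n)$. Thus $A$ intertwines the \emph{bounded} normal restrictions $N|_{H_n}$ and $M|_{\ran F(\Delta_n)}$, the classical bounded theorem (your Rosenblum engine) gives $AN^*x=M^*Ax$ on $H_n$, and one then lets $n\to\infty$: for $x\in D(N^*)=D(N)$ one has $E(\Delta_n)x\to x$ and $N^*E(\Delta_n)x=E(\Delta_n)N^*x\to N^*x$, so the closedness of $M^*$ yields $Ax\in D(M^*)$ and $M^*Ax=AN^*x$, i.e. $AN^*\subset M^*A$. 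This is the actual content hiding behind your phrase ``through the spectral theorem'' (the requisite spectral apparatus is indeed in \cite{SCHMUDG-book-2012}); without some such argument replacing the unproven intertwining step, the proposal is incomplete in exactly the case that makes the unbounded theorem harder than the bounded one.
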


There have been many generalizations of the Fuglede-Putnam theorem
since Fuglede's paper. However, most generalizations were devoted to
relaxing the normality assumption. Apparently, the first
generalization of the Fuglede theorem to an unbounded $A$ was
established in \cite{Nussbaum-1969}. Then the first generalization
involving unbounded operators of the Fuglede-\textit{Putnam} theorem
is:

\begin{thm}\label{Fug-Put-MORTAD-PAMS-2003} If $A$ is a closed and symmetric operator and if $N$ is an unbounded normal operator, then
\[AN\subset N^*A\Longrightarrow AN^*\subset NA\]
whenever $D(N)\subset D(A)$.
\end{thm}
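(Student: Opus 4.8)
The plan is to reduce the unbounded intertwining to a family of bounded Fuglede--Putnam instances carried by the spectral projections of $N$, and then to recover the conclusion by a closedness argument. Using the spectral theorem, write $N=\int_{\C}\lambda\,dE(\lambda)$ and set $E_n=E(\Delta_n)$ with $\Delta_n=\{\lambda\in\C:|\lambda|\le n\}$ and $H_n=E_nH$. Each $H_n$ is contained in $D(N)=D(N^*)\subset D(A)$, the spaces $H_n$ increase to a dense subspace, and $N,N^*$ are reduced by $E_n$ and bounded (by $n$) on $H_n$. Since $A$ is closed and $E_n\in B(H)$ has range in $D(A)$, the closed graph theorem gives $AE_n\in B(H)$, so all the pieces $B_n:=E_nAE_n$ are bounded.

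First I would show that $A$ leaves every $H_n$ invariant. For $x\in H_n$ one has $N^kx\in H_n\subset D(A)$ for all $k$, and iterating the hypothesis $AN\subset N^*A$ yields $AN^kx=N^{*k}Ax$. Hence $\|N^{*k}Ax\|=\|AN^kx\|\le\|AE_n\|\,n^k\|x\|$, and feeding this into the spectral representation of $N^*$ forces the scalar measure $\|E(\cdot)Ax\|^2$ to be supported in $\Delta_n$; that is, $Ax\in H_n$. Consequently $AE_n=E_nAE_n$.

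The decisive step, and the place where symmetry is essential, is to promote this to genuine commutation $AE_n x=E_n Ax$ for all $x\in D(A)$. Granting $A(H_n)\subset H_n$, I would check that $A$ also maps $D(A)\cap H_n^{\perp}$ into $H_n^{\perp}$: for $y\in D(A)\cap H_n^{\perp}$ and $h\in H_n$, symmetry gives $\langle Ay,h\rangle=\langle y,Ah\rangle$, and since $Ah\in H_n$ while $y\perp H_n$ this vanishes. Thus $H_n$ reduces $A$ and $E_n$ commutes with $A$ on $D(A)$. In particular $AE_nv=E_nAv\to Av$ for every $v\in D(A)$, which is exactly the convergence the final limiting argument needs.

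With commutation in hand, restrict everything to $H_n$. The operator $B_n$ acts on $H_n$ as a bounded operator and, by the hypothesis, satisfies $B_nN_n=N_n^{*}B_n$, where $N_n=N|_{H_n}$ is bounded and normal. Applying the bounded Fuglede--Putnam theorem (Theorem~\ref{Fug-Put UNBD A BD}) on $H_n$ gives $B_nN_n^{*}=N_nB_n$, which unwinds to $AN^{*}y=NAy$ for every $y\in H_n$. Finally, fix $x\in D(AN^{*})$, so $x\in D(N)\subset D(A)$ and $N^{*}x\in D(A)$, and apply the last identity to $y=E_nx$. Using $E_nN^{*}=N^{*}E_n$ and the commutation from the previous step, the right-hand side becomes $N(E_nAx)$ with $E_nAx\to Ax$, while the left-hand side equals $E_nAN^{*}x\to AN^{*}x$. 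Since $N$ is closed, I conclude $Ax\in D(N)$ and $NAx=AN^{*}x$, i.e.\ $AN^{*}\subset NA$. The main obstacle throughout is the mismatch between the unboundedness of $A$ and the bounded Fuglede--Putnam theorem: the whole argument hinges on manufacturing the reducing projections $E_n$ and, above all, on using symmetry to make them commute with $A$, so that the bounded theorem can be applied slice by slice and the pieces reassembled by closedness.
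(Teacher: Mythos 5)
Your proof is correct, and it takes a genuinely different route from the one the paper relies on. The paper gives no self-contained argument: it observes that the proof in \cite{MHM1} (written there for self-adjoint $A$) uses only the closedness and the symmetry of $A$. That proof shares your starting point --- truncate by the spectral projections $E_n=E(\Delta_n)$ of $N$ and note that $AE_n\in B(H)$ by the closed graph theorem, since $\ran(E_n)\subset D(N)\subset D(A)$ --- but it then feeds the bounded operator $AE_n$ directly into the half-unbounded Fuglede--Putnam theorem (Theorem \ref{Fug-Put UNBD A BD}, bounded intertwiner, \emph{unbounded} normals): from $(AE_n)N\subset N^*(AE_n)$ one gets $(AE_n)N^*\subset N(AE_n)$, and the conclusion follows by a limiting argument in which the symmetry of $A$ is used to move $A$ across inner products (equivalently, through $(AE_n)^*=E_nA^*\supset E_nA$) before the closedness of $N$ is invoked. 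You, by contrast, never apply Fuglede--Putnam with unbounded normal operators: you first establish the stronger structural fact that each $H_n$ \emph{reduces} $A$ --- invariance of $H_n$ via the moment-growth characterization of spectral subspaces (if $Ax\in\bigcap_k D(N^{*k})$ and $\|N^{*k}Ax\|\le Cn^k$ for all $k$, then $Ax\in H_n$), and invariance of $D(A)\cap H_n^\perp$ via symmetry --- so that only the genuinely bounded theorem on each slice is needed, and the final limit is immediate from $E_nA\subset AE_n$ together with the closedness of $N$. The trade-off is clear: your reducing-subspace lemma is a pleasant structural by-product (the intertwining relation forces every bounded spectral subspace of $N$ to reduce $A$) and it makes the limiting step transparent, at the price of the iteration-and-growth argument, whose small verifications deserve to be spelled out (each step of $AN^kx=N^{*k}Ax$ requires $N^kx\in D(AN)$ and $AN^kx\in D(N^*)$, both of which the inclusion $AN\subset N^*A$ and $N^kx\in H_n\subset D(A)$ supply); the paper's route is shorter because Theorem \ref{Fug-Put UNBD A BD} already absorbs the unboundedness of $N$, but its limit step is subtler precisely because there $E_n$ is not known to commute with $A$.
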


The previous result was established in \cite{MHM1} under the
assumption of the self-adjointness of $A$. However, and by
scrutinizing its proof in \cite{MHM1} or
\cite{Mortad-Thesis-Edinburgh-2003}, it is seen that only the
closedness and the symmetricity of $A$ were needed. This key
observation is all what one needs to start the proof of some of the
results below.

Let us also recall some perhaps known auxiliary results (cf. Lemmata
2.1 \& 2.2 in \cite{Gustafson-MZ-positive-PROD-1968}). See also
\cite{Meziane-Mortad-I} for the case of normality.

\begin{lem}\label{product}(\cite{Jung-Mortad-Stochel})
Let $A$ and $B$ be self-adjoint operators. Assume that $B\in B(H)$
and $BA \subseteq AB$. Then the following assertions hold:
\begin{enumerate}
\item[(i)]  $AB$ is a self-adjoint operator and $AB=\overline{BA}$,
\item[(ii)] if $A$ and $B$ are positive so is $AB$.
\end{enumerate}
\end{lem}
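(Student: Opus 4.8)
The plan is to obtain assertion (i) by squeezing $AB$ between $\overline{BA}$ and $(AB)^*$, and then to deduce (ii) from (i) by a core argument. Throughout I use that, since $B\in B(H)$, the hypothesis $BA\subseteq AB$ simply says $Bx\in D(A)$ and $BAx=ABx$ for every $x\in D(A)$, and that $D(BA)=D(A)$ is dense.

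First I would record the two easy inclusions. Because $B\in B(H)$, the product rule recalled in the introduction yields $(BA)^*=A^*B^*=AB$; in particular $AB$ is densely defined, so $BA$ is closable with $\overline{BA}=(BA)^{**}=(AB)^*$. Next, since $A$ is closed and $B$ is bounded, $AB$ is itself closed: if $x_n\to x$ and $ABx_n\to y$, then $Bx_n\to Bx$ and the closedness of $A$ forces $Bx\in D(A)$ and $ABx=y$. Hence $BA\subseteq AB$ gives $\overline{BA}\subseteq AB$, that is $(AB)^*\subseteq AB$. It remains to prove the reverse inclusion $AB\subseteq\overline{BA}$, which I expect to be the main obstacle: knowing only $(AB)^*\subseteq AB$ is not enough to conclude self-adjointness.

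To overcome this I would regularize with the resolvent of $A$. A direct computation on $BA\subseteq AB$ shows that $B$ commutes with $R_n:=(A-in)^{-1}$, hence with $F_n:=-inR_n=(I+iA/n)^{-1}$. The operators $F_n$ lie in $B(H)$, satisfy $\|F_n\|\le1$ and $F_n\to I$ strongly, map $H$ into $D(A)$, and commute with $A$ on $D(A)$. Thus for $u\in D(AB)$ we have $F_nu\in D(A)=D(BA)$ and
\[BA(F_nu)=AB F_nu=AF_nBu=F_nA(Bu)=F_n(ABu)\longrightarrow ABu,\]
while $F_nu\to u$. By the very definition of the closure this gives $u\in D(\overline{BA})$ with $\overline{BA}u=ABu$, i.e. $AB\subseteq\overline{BA}$. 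Combining the inclusions, $AB=\overline{BA}=(AB)^*$, which is exactly assertion (i).

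For (ii), note first that (i) makes $D(A)$ a core for the self-adjoint operator $AB=\overline{BA}$. Since $B\ge0$ is bounded, $\sqrt B\in B(H)$; being a uniform limit of polynomials in $B$, it inherits the commutation with $R_n$, whence $\sqrt B\,A\subseteq A\sqrt B$ and in particular $\sqrt B$ maps $D(A)$ into $D(A)$. Therefore, for $x\in D(A)$,
\[\langle ABx,x\rangle=\langle BAx,x\rangle=\langle\sqrt B\,Ax,\sqrt B\,x\rangle=\langle A\sqrt B\,x,\sqrt B\,x\rangle\ge0,\]
because $A\ge0$ and $\sqrt B\,x\in D(A)$. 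A graph-norm approximation along the core $D(A)$ then extends $\langle ABx,x\rangle\ge0$ to all $x\in D(AB)$, giving $AB\ge0$.
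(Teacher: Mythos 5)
Your proof is correct, but there is no internal argument to compare it with: the paper states this lemma without proof, importing it verbatim from \cite{Jung-Mortad-Stochel}, so your write-up is in effect a self-contained replacement for an external citation. Your route is the classical resolvent-regularization argument, and every step checks out: $(BA)^*=A^*B^*=AB$ because $B\in B(H)$; $AB$ is closed since $A$ is closed and $B$ bounded, whence $\overline{BA}=(BA)^{**}=(AB)^*\subseteq AB$; $B$ commutes with $R_n=(A-in)^{-1}$ because $B(A-in)\subseteq (A-in)B$ while $A-in$ maps $D(A)$ bijectively onto $H$; and the chain $BA(F_nu)=F_n(ABu)\to ABu$ with $F_nu\to u$ and $F_nu\in D(BA)$ legitimately places $u$ in $D(\overline{BA})$, giving the key reverse inclusion $AB\subseteq\overline{BA}$. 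Part (ii) is equally sound: $D(A)=D(BA)$ is a core for $AB=\overline{BA}$ by construction, and the form $x\mapsto\langle ABx,x\rangle$ passes to the closure by graph-norm continuity. Two minor remarks. First, the clause ``in particular $AB$ is densely defined'' does not follow from the identity $(BA)^*=AB$; the correct (and immediate) reason, which you need before you may even form $(AB)^*$ and write $\overline{BA}=(AB)^*$, is the inclusion $D(A)=D(BA)\subseteq D(AB)$ with $D(A)$ dense. Second, your commutation $\sqrt{B}\,A\subseteq A\sqrt{B}$ is precisely the square-root case of the lemma the paper quotes from \cite{MHM1}, so within this paper you could invoke that result instead of re-deriving it through polynomial approximation of $\sqrt{B}$. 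What your approach buys is elementarity and self-containedness: it avoids the spectral theorem entirely, since even the strong convergence $F_n\to I$ follows from $F_nx-x=-(i/n)F_nAx$ on the dense set $D(A)$ together with $\|F_n\|\leq 1$.
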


We shall also have need for the following result:

\begin{lem}\label{PAMS 03 LEMMA squr rt BA subset AB }
Let $B\in B(H)$ be self-adjoint. If $BA\subset AB$ where $A$ is
closed, then $f(B)A\subset Af(B)$ for any continuous function $f$ on
$\sigma(B)$. In particular, and if $B$ is positive, then $\sqrt
BA\subset A\sqrt B$.
\end{lem}

\begin{rema}
In fact, the previous lemma was shown in (\cite{MHM1}, Proposition
1) under the assumption "$A$ being unbounded and self-adjoint", but
by looking closely at its proof, we see that only the closedness of
$A$ was needed (cf. \cite{Bernau JAusMS-1968-square root} and
\cite{Jablonski et al 2014}).
\end{rema}

We are now ready to state and prove the first result of this
section.

\begin{thm}\label{BDM-THM-AB self-adjoint}
Let $A$ be an unbounded closed and symmetric operator with domain
$D(A)$, and let $B\in B(H)$ be positive. If $AB$ is normal, then
$BA\subset AB$, and so $AB$ is self-adjoint. Also, $\overline{BA}$
is self-adjoint.

Besides, $B|A|\subset |A|B$, and so $|A|B$ is self-adjoint and
positive. Moreover, $|A|B=\overline{B|A|}$.
\end{thm}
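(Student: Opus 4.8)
The plan is to set $N=AB$ (normal by hypothesis) and to extract everything from one algebraic observation: since $A$ is symmetric, $A\subset A^*$, and since $B=B^*\in B(H)$, we get $BA\subset BA^*\subset (AB)^*=N^*$. The first payoff is purely about domains. Reading $BA\subset N^*$ at the level of domains gives $D(A)=D(BA)\subset D(N^*)$, and because $N$ is \emph{normal} one has $D(N^*)=D(N)=\{x:Bx\in D(A)\}$. Hence $B$ leaves $D(A)$ invariant, $B(D(A))\subset D(A)$, and moreover $N^*x=BAx$ for every $x\in D(A)$. This invariance is exactly what lets a Fuglede--Putnam argument run without knowing commutativity in advance.

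Next I would feed this into the bounded Fuglede--Putnam theorem (Theorem \ref{Fug-Put UNBD A BD}) with the bounded operator $B$. Using only $B(D(A))\subset D(A)$ and $BA\subset N^*$, a direct domain check gives $BN\subset N^*B$; taking $M=N^*$ (again normal) in Theorem \ref{Fug-Put UNBD A BD} yields $BN^*\subset NB$. Evaluating this inclusion on $D(A)$, where $N^*x=BAx$ and $Nx=ABx$, collapses it to $B^2A\subset AB^2$, i.e.\ $B^2$ commutes with $A$. Since $B\geq0$, Lemma \ref{PAMS 03 LEMMA squr rt BA subset AB } applied to the positive operator $B^2$ gives $\sqrt{B^2}\,A\subset A\sqrt{B^2}$, that is, $BA\subset AB$. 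This is the first assertion, and it also shows $AB$ is closed, being the product of a closed operator with a bounded one.

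The main obstacle is to promote $BA\subset AB$ to the self-adjointness of $AB$ and of $\overline{BA}$; this is precisely the delicate point that arises because $A$ is merely symmetric. I would argue in two moves. First, a short computation using $A\subset A^*$, the invariance $B(D(A))\subset D(A)$, and the established commutation shows that $BA$ is \emph{symmetric}, hence so is $\overline{BA}$; since $AB$ is closed and $BA\subset AB$, we also have $\overline{BA}\subset AB=N$. Second --- the crux --- I must prove the reverse inclusion $N\subset\overline{BA}$, i.e.\ that $D(A)$ is a core for $N$. For this I pass to $|A|$: from $BA\subset AB$ one obtains $BA^*\subset A^*B$ by adjunction, whence $B$ commutes with $A^*A=|A|^2$, and therefore with every bounded spectral projection $P_n=\chi_{[0,n]}(|A|)$. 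Given $x\in D(N)$ (so $Bx\in D(A)$), the vectors $x_n=P_nx$ lie in $D(A)$ and satisfy $x_n\to x$; since $P_n$ commutes with both $B$ and $|A|$ one checks $BAx_n=ABx_n=A(P_nBx)\to ABx$, using that $\||A|(P_nBx-Bx)\|\to0$ and $\|Ay\|=\||A|y\|$. Closedness of $\overline{BA}$ then puts $x\in D(\overline{BA})$ with $\overline{BA}x=ABx$. Thus $AB=\overline{BA}$, and being simultaneously normal and symmetric it is self-adjoint; so is $\overline{BA}$.

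Finally, for the modulus. The relations $BA\subset AB$ and $BA^*\subset A^*B$ combine to give $B(A^*A)\subset(A^*A)B$, and since a bounded operator commuting with a positive self-adjoint operator commutes with its positive square root, this yields $B|A|\subset|A|B$. Now $|A|$ is genuinely self-adjoint and positive even though $A$ is only symmetric, so I may invoke Lemma \ref{product} directly with the pair $(|A|,B)$: it gives that $|A|B$ is self-adjoint, that $|A|B=\overline{B|A|}$, and, both factors being positive, that $|A|B\geq0$. The only non-formal ingredient here is the standard fact that commutation with a positive self-adjoint operator descends to its square root, the unbounded analogue of Lemma \ref{PAMS 03 LEMMA squr rt BA subset AB }.
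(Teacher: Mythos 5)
Your proposal is correct, and it splits naturally in two. The first half is essentially the paper's opening: you establish $B(AB)\subset (AB)^*B$ (the paper gets it via $BAB\subset BA^*B\subset (AB)^*B$, you via the domain invariance $B(D(A))\subset D(A)$, which is the same inclusion), apply the Fuglede--Putnam theorem with $N=AB$, $M=(AB)^*$ to get $B(AB)^*\subset AB^2$, hence $B^2A\subset AB^2$, and then invoke Lemma \ref{PAMS 03 LEMMA squr rt BA subset AB } with the positive operator $B^2$ to conclude $BA\subset AB$. Where you genuinely diverge is the promotion to self-adjointness. The paper first proves $AB=(AB)^*$ by a restriction trick (the operators $T\subset AB$ and $S\subset (AB)^*$ agree on $D(A)$, whence $(AB)^*-AB\subset (T-S)^*=0$, and $D(AB)=D[(AB)^*]$ by normality), then separately shows $\overline{BA}$ is normal by computing $\overline{BA}(\overline{BA})^*\subset AA^*B^2$ and $(\overline{BA})^*\overline{BA}\subset A^*AB^2$ and invoking Lemma \ref{product} together with maximality of self-adjoint and of normal operators to force $\overline{BA}=AB$. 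You instead prove directly that $D(A)$ is a core for $AB$: from $BA\subset AB$ and $BA^*\subset A^*B$ you get $B|A|^2\subset |A|^2B$, hence $B$ commutes with the spectral projections $P_n=\chi_{[0,n]}(|A|)$, and the regularization $x_n=P_nx$ together with $\|Ay\|=\||A|y\|$ yields $AB\subset\overline{BA}$; then ``symmetric plus normal implies self-adjoint'' (valid, since normality gives $D(N)=D(N^*)$) finishes both claims at once. Your route buys a single constructive argument that dispenses with the paper's two maximality appeals and its product computations, at the price of invoking the spectral theorem for the unbounded $|A|$ (commutation with $|A|^2$ descending to spectral projections) --- a fact the paper itself uses anyway, in the same spirit, to derive $B|A|\subset |A|B$ ``by the spectral theorem say''. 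Your treatment of the modulus statement then coincides with the paper's: $BA^*A\subset A^*AB$, descent to the square root, and Lemma \ref{product} applied to the pair $(|A|,B)$. All steps check out; in particular your verifications $BN\subset N^*B$, the symmetry of $BA$, and $AP_nBx\to ABx$ are sound.
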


\begin{proof}Since $B\in B(H)$ is self-adjoint, we have
$(BA)^*=A^*B$ and $BA^*\subset (AB)^*$. Now, write
\[B(AB)=BAB\subset BA^*B\subset (AB)^*B.\]
Since $AB$ and $(AB)^*$ are both normal, the Fuglede-Putnam theorem
applies and gives
\[B(AB)^*\subset (AB)^{**}B=\overline{AB}B=AB^2,\]
i.e.
\[B^2A\subset B^2A^*\subset B(AB)^*\subset AB^2.\]
Since $A$ is closed and $B\in B(H)$ is positive, Lemma \ref{PAMS 03
LEMMA squr rt BA subset AB } gives
\[BA\subset AB.\]
To show that $AB$ is self-adjoint, we proceed as follows: Observe
that
\[BA\subset BA^*\subset (AB)^*.\]
Since we also have $BA\subset AB$, we now know that
\[BAx=ABx=(AB)^*x\]
for all $x\in D(A)$. This says that $AB$ and $(AB)^*$ coincide on
$D(A)$. Denoting the restrictions of the latter operators to $D(A)$
by $T$ and $S$ respectively, it is seen that
\[T-S\subset 0, ~T\subset AB,\text{ and }S\subset (AB)^*.\]
Hence
\[(AB)^*-AB\subset T^*-S^*\subset (T-S)^*=0.\]
Since $D(AB)=D[(AB)^*]$ thanks to the normality of $AB$, it ensues
that $AB=(AB)^*$, that is, $AB$ is self-adjoint.

Now, we show that $\overline{BA}$ is self-adjoint. First, we show
that $\overline{BA}$ is normal. Clearly $BA^*\subset A^*B$ for we
already know that $BA\subset AB$. Hence
\[BA^*A\subset A^*BA\subset A^*AB.\]
Therefore
\[\overline{BA}(\overline{BA})^*=ABA^*B\subset AA^*B^2\]
and
\[(\overline{BA})^*\overline{BA}=A^*BAB\subset A^*AB^2.\]
By Lemma \ref{product}, it is seen that both of $AA^*B^2$ and
$AA^*B^2$ are self-adjoint. By the maximality of self-adjoint
operators, it ensues that
\[\overline{BA}(\overline{BA})^*=AA^*B^2\text{ and }(\overline{BA})^*\overline{BA}=A^*AB^2.\]
Since $AB$ is self-adjoint, $(AB)^2$ is self-adjoint. But
\[(AB)^2=ABAB\subset AA^*B^2\]
and so $(AB)^2=AA^*B^2$. Similarly,
\[ABAB\subset A^*BAB\subset A^*AB^2\]
or $(AB)^2=A^*AB^2$. Therefore, we have shown that
\[(\overline{BA})^*\overline{BA}=\overline{BA}(\overline{BA})^*.\]
In other words, $\overline{BA}$ is normal.

To infer that $\overline{BA}$ is self-adjoint, observe that
$BA\subset AB$ gives $\overline{BA}\subset AB$, but because normal
operators are maximally normal, we obtain $\overline{BA}=AB$, from
which we derive the self-adjointness of $\overline{BA}$.

To show the last claim of the theorem, consider again $BA^*A\subset
A^*AB$. So, $B|A|\subset |A|B$ by the spectral theorem say. Since
$B\geq0$, Lemma \ref{product} gives the self-adjointness and the
positivity of $|A|B$, as well as $|A|B=\overline{B|A|}$. This
completes the proof.
\end{proof}

\begin{rema}
Under the assumptions of the preceding theorem (by consulting
\cite{Boucif-Dehimi-Mortad}), we have:
\[|AB|=|\overline{BA}|=|A|B=\overline{B|A|}.\]
\end{rema}

\begin{cor}\label{23/02/2022 ...COR}
Let $A$ be an unbounded closed and symmetric operator and let $B\in
B(H)$ be positive. Suppose that $AB$ is normal. Then
\[BA \text{ is closed} \Longrightarrow A\text{ is self-adjoint}.\]
In particular, if $B$ is invertible, then $A$ is self-adjoint.
\end{cor}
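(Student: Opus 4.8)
The plan is to leverage Theorem \ref{BDM-THM-AB self-adjoint}, which already does the heavy lifting. Under the hypotheses ($A$ closed symmetric, $B\geq 0$ bounded, $AB$ normal), that theorem establishes $BA\subset AB$ together with the identity $\overline{BA}=AB$. So the central observation to exploit is this: if $BA$ is \emph{closed}, then $BA=\overline{BA}=AB$, which turns the inclusion $BA\subset AB$ into an equality and forces $D(A)=D(AB)$ on the relevant domains. My first step would therefore be to invoke the theorem to record $BA\subset AB$, the normality (indeed self-adjointness) of $AB$, and the closure identity $\overline{BA}=AB$.

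Next I would use the closedness hypothesis to conclude $BA=AB$, and then extract self-adjointness of $A$ from this. The key point is that $BA=AB$ as operators means they share a domain and act identically; since $AB$ is self-adjoint by the theorem, I would aim to transfer self-adjointness back to $A$. Because $B$ is bounded and positive, and $\sqrt{B}A\subset A\sqrt{B}$ by Lemma \ref{PAMS 03 LEMMA squr rt BA subset AB }, one expects a factorization argument; alternatively, one computes $A^*$ directly. The cleanest route is to show $A^*\subset A$, which combined with the standing symmetry $A\subset A^*$ yields $A=A^*$. I would start from $(BA)^*=A^*B$ (valid since $B$ is bounded self-adjoint) and use $BA=AB$ being self-adjoint to write $A^*B=(BA)^*=BA=AB$, hoping to cancel $B$ on suitable domains.

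The main obstacle I anticipate is exactly this cancellation of $B$, since $B$ need not be invertible in the general statement. This is where I expect the argument to require care: from $A^*B=AB$ one wants $A^*x=Ax$, but only on $\ran B$, not all of $D(A)$, so a density or range argument is needed to propagate the equality to the closure of $D(A)$ and conclude $A^*\subset A$. Controlling the behaviour of $A^*$ on vectors outside the range of $B$ is the delicate part, and I would try to use the closedness of $A$ together with $A\subset A^*$ to close the gap.

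For the final sentence, the invertible case is immediate and serves as a clean corollary of the general mechanism: if $B$ is invertible then $B^{-1}\in B(H)$, the cancellation is trivial, and moreover the inclusion $BA\subset AB$ gives $A\subset B^{-1}AB$ while $BA$ is automatically closed (as $A=B^{-1}(BA)$ with $B^{-1}$ bounded, so $BA$ closed is equivalent to $A$ closed, which holds by hypothesis). Thus the invertibility instantly supplies the closedness of $BA$ and removes the cancellation difficulty, making $A$ self-adjoint directly.
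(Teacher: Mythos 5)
Your setup is right and your reading of Theorem \ref{BDM-THM-AB self-adjoint} is correct, as is your observation for the invertible case (there $BA$ is automatically closed, since $x_n\to x$ and $BAx_n\to y$ give $Ax_n\to B^{-1}y$ and $A$ is closed). But the central step of your plan has a genuine gap, which you yourself flag and do not close: from $A^*B=(BA)^*=BA=AB$ you can only compare $A^*$ and $A$ on $\ran B$, and no density argument will rescue this. Worse, the cancellation plan attacks the wrong quantity. Since $A$ is symmetric, $A^*$ \emph{automatically} agrees with $A$ on all of $D(A)$; equality of values is free, and what must actually be proved is the domain inclusion $D(A^*)\subset D(A)$. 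The identity $A^*B=AB$ only yields $\ran B\cap D(A^*)=\ran B\cap D(A)$ (compare the domains: $Bx\in D(A^*)$ iff $Bx\in D(A)$), which says nothing about vectors of $D(A^*)$ lying outside $\ran B$. So ``propagating the equality by density/closedness'' stalls exactly where you anticipated trouble, and cannot be repaired along those lines when $B$ fails to be invertible.

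The paper's proof avoids any cancellation by multiplying $A^*$ by $B$ on the \emph{left} rather than the right. Since $B=B^*\in B(H)$, the general inclusion $T^*S^*\subset (ST)^*$ gives $BA^*=B^*A^*\subset (AB)^*$, and by Theorem \ref{BDM-THM-AB self-adjoint} one has $(AB)^*=(\overline{BA})^*=\overline{BA}$ because $\overline{BA}=AB$ is self-adjoint. Hence, when $BA$ is closed, $BA^*\subset \overline{BA}=BA$. Now the punchline: $B$ is everywhere defined, so $D(BA^*)=D(A^*)$ and $D(BA)=D(A)$, and the inclusion $BA^*\subset BA$ therefore reads directly as $D(A^*)\subset D(A)$; with symmetry this gives $D(A)=D(A^*)$ and $A=A^*$. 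This is the idea missing from your attempt: by forming $BA^*$, whose domain is all of $D(A^*)$, the problem becomes a pure domain comparison and no inverse of $B$, range argument, or density argument is ever needed.
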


\begin{proof}By Theorem \ref{BDM-THM-AB self-adjoint}, $\overline{BA}$ is
self-adjoint and $\overline{BA}=AB$. Hence
\[BA^*\subset (AB)^*=(\overline{BA})^*=\overline{BA}.\]
So, when $BA$ is closed, $BA^*\subset BA$. Therefore, $D(A^*)\subset
D(A)$, and so $D(A)=D(A^*)$. Thus, $A$ is self-adjoint, as required.
\end{proof}

\begin{cor}
Let $A$ be an unbounded closed and symmetric operator with domain
$D(A)$, and let $B\in B(H)$ be positive. If $BA^*$ is normal, then
$BA\subset AB$, and so $BA^*$ is self-adjoint.
\end{cor}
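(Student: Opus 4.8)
The plan is to reduce this statement to Theorem \ref{BDM-THM-AB self-adjoint} by a single adjoint computation. First I would record that $BA^*$ is densely defined: since $B\in B(H)$ is everywhere defined, $D(BA^*)=D(A^*)$, which is dense because $A$ is closed (hence closable) and densely defined. Because $B$ is bounded and self-adjoint, the product rule for adjoints holds with equality, so
\[(BA^*)^*=(A^*)^*B^*=A^{**}B=AB,\]
where I used $A^{**}=A$ (as $A$ is closed and densely defined) and $B^*=B$. This identity is the crux of the argument.

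Next I would exploit the hypothesis that $BA^*$ is normal. Since the adjoint of a normal operator is again normal, the computation above shows that $AB=(BA^*)^*$ is normal. At this point the hypotheses of Theorem \ref{BDM-THM-AB self-adjoint} are all met: $A$ is closed and symmetric, $B\in B(H)$ is positive, and $AB$ is normal. Applying that theorem immediately yields $BA\subset AB$ together with the self-adjointness of $AB$.

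It then remains to deduce that $BA^*$ itself is self-adjoint. Taking adjoints once more in the identity $(BA^*)^*=AB$ gives $(BA^*)^{**}=(AB)^*$; since $BA^*$ is normal (hence closed and densely defined), we have $(BA^*)^{**}=BA^*$, so that $BA^*=(AB)^*$. But $AB$ is self-adjoint by the previous step, whence $BA^*=(AB)^*=AB$ is self-adjoint, as desired.

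I expect the only delicate point to be the bookkeeping of adjoints of unbounded products—keeping the inclusion directions correct and invoking equality precisely because the bounded factor $B$ sits on the appropriate side—together with the standard fact that normality is inherited by the adjoint. Once the identity $(BA^*)^*=AB$ is secured, the corollary is a direct consequence of Theorem \ref{BDM-THM-AB self-adjoint}.
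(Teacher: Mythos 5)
Your proposal is correct and follows exactly the paper's argument: the paper likewise observes that $(BA^*)^*=AB$ (using $B\in B(H)$ and $A^{**}=A$ for closed $A$), notes that normality of $BA^*$ passes to its adjoint $AB$, and then applies Theorem \ref{BDM-THM-AB self-adjoint}. Your write-up merely makes explicit the routine details (density of $D(A^*)$, the equality case of the product-adjoint rule, and the closing identity $BA^*=(BA^*)^{**}=(AB)^*=AB$) that the paper leaves implicit.
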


\begin{proof}Since $BA^*$ is normal, so is $(BA^*)^*=AB$. To obtain
the desired conclusion, one just need to apply Theorem
\ref{BDM-THM-AB self-adjoint}.
\end{proof}

The case of the normality of $BA$ was unexpectedly trickier. After a
few attempts, we have been able to show the result.

\begin{thm}\label{BDM-THM-BA self-adjoint}
Let $A$ be an unbounded closed and symmetric operator with domain
$D(A)$, and let $B\in B(H)$ be self-adjoint. Assume $BA$ is normal.
Then $A$ is necessarily self-adjoint.

If we further assume that $B$ is positive, then $BA$ becomes
self-adjoint and $BA=AB$.
\end{thm}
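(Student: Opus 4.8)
The plan is to separate the two assertions and to show first that mere self-adjointness of $B$ already forces $A$ to be self-adjoint, after which the positive case follows from the already established Theorem \ref{BDM-THM-AB self-adjoint}. Since $B\in B(H)$, I first record that $(BA)^*=A^*B$, and since $BA$ is normal its domain equals that of its adjoint; this yields the key identity
\[D(A)=D(BA)=D[(BA)^*]=D(A^*B)=\{x\in H:\ Bx\in D(A^*)\},\]
together with $\|BAx\|=\|A^*Bx\|$ for every $x\in D(A)$. The strategy for the first assertion is to reduce everything to the single commutation relation $BA\subset AB$: indeed, taking adjoints in $BA\subset AB$ and using the general inclusion $BA^*\subset(AB)^*$ gives $BA^*\subset(AB)^*\subset(BA)^*=A^*B$, so that $B$ commutes with $A^*$. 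Feeding $BA^*\subset A^*B$ into the displayed domain identity then shows that every $y\in D(A^*)$ satisfies $By\in D(A^*)$, whence $y\in D(A)$; thus $D(A^*)\subset D(A)$ and, combined with the symmetry $A\subset A^*$, we conclude $A=A^*$.

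To obtain $BA\subset AB$ I would imitate the mechanism of Theorem \ref{BDM-THM-AB self-adjoint}. From $A\subset A^*$ one gets $BAB\subset BA^*B$, i.e. $(BA)B\subset B(BA)^*$; and since $D(BA)=D(A)$, the symmetric Fuglede--Putnam theorem (Theorem \ref{Fug-Put-MORTAD-PAMS-2003}), applied to the closed symmetric $A$ and the normal $N=BA$ through $A(BA)=ABA\subset A^*BA=(BA)^*A$, delivers $A(BA)^*\subset(BA)A$, that is,
\[AA^*B\subset BA^2.\]
The aim is then to promote these relations, exactly as in the proof of Theorem \ref{BDM-THM-AB self-adjoint}, to a commutation with the positive operator $B^2$, namely $B^2A\subset AB^2$, and to invoke Lemma \ref{PAMS 03 LEMMA squr rt BA subset AB }.

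The main obstacle lies precisely here, and it is the reason this case is trickier. When $B$ was positive (Theorem \ref{BDM-THM-AB self-adjoint}) the square-root lemma applied to $B^2$ returned $B$ itself, because $|B|=B$; with $B$ merely self-adjoint, Lemma \ref{PAMS 03 LEMMA squr rt BA subset AB } only yields $|B|A\subset A|B|$, and the sign of $B$ is lost. Moreover the clean intertwining $BN\subset N^*B$ that drove the Fuglede--Putnam step in Theorem \ref{BDM-THM-AB self-adjoint} is no longer available: the symmetry of $A$ supplies only the one-sided relation $NB\subset BN^*$, which merely says that $BAB$ is symmetric. Recovering the full relation $BA\subset AB$ (equivalently $D(A^*)\subset D(A)$ directly) is the crux, and I expect it to be settled by exploiting the complete normality identity $N^*N=NN^*$, i.e. $A^*B^2A=BAA^*B$, in tandem with the norm equality $\|BAx\|=\|A^*Bx\|$ and the domain identity above, rather than by the functional calculus alone.

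Finally, for the positive case, once $A=A^*$ is known the operator $(BA)^*=A^*B=AB$ is the adjoint of a normal operator and hence normal. Since $A$ is now closed and self-adjoint and $B\geq0$, Theorem \ref{BDM-THM-AB self-adjoint} applies and gives that $AB$ is self-adjoint with $\overline{BA}=AB$. As $BA$ is normal it is in particular closed, so $\overline{BA}=BA$, and therefore $BA=AB$ is self-adjoint, which is the remaining claim.
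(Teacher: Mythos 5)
Your final paragraph (the positive case) and your opening Fuglede--Putnam step coincide with the paper's proof and are correct: from $A(BA)\subset A^*BA=(BA)^*A$ and $D(BA)=D(A)$, Theorem \ref{Fug-Put-MORTAD-PAMS-2003} gives $AA^*B\subset BA^2$; and once $A=A^*$ is known, Theorem \ref{BDM-THM-AB self-adjoint} together with the closedness of the normal operator $BA$ gives $BA=AB$ self-adjoint, exactly as you argue. The problem is the core of the theorem: you never prove $A=A^*$ for merely self-adjoint $B$. You reduce it to the commutation $BA\subset AB$ and then, at what you rightly call the crux, you only ``expect'' that relation to follow from the normality identity $A^*B^2A=BAA^*B$. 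This is a genuine gap, and it cannot be closed along the route you chose, because $BA\subset AB$ is simply not a consequence of the hypotheses. Take $A_0=\left(\begin{smallmatrix}0&1\\1&0\end{smallmatrix}\right)$ and $B_0=\left(\begin{smallmatrix}1&0\\0&-1\end{smallmatrix}\right)$: then $B_0A_0=\left(\begin{smallmatrix}0&1\\-1&0\end{smallmatrix}\right)$ is normal (it is orthogonal), yet $B_0A_0=-A_0B_0\neq A_0B_0$; forming $A=S\oplus A_0$ and $B=I\oplus B_0$ with $S$ unbounded self-adjoint preserves every hypothesis ($A$ unbounded, closed, symmetric; $B\in B(H)$ self-adjoint; $BA$ normal) while $BA\not\subset AB$. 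Note that in this example $B^2A\subset AB^2$ holds trivially ($B^2=I$), which confirms your own observation that Lemma \ref{PAMS 03 LEMMA squr rt BA subset AB } can only return $|B|A\subset A|B|$: the loss of the sign of $B$ is not a technical nuisance to be circumvented but a real obstruction, so no amount of massaging of $N^*N=NN^*$ will deliver $BA\subset AB$.

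The paper escapes by commuting $B$ with the modulus $|A^*|$ instead of with $A$. From $AA^*B\subset BA^2$ and $A^2\subset AA^*$ (symmetry of $A$) one gets $|A^*|^2B\subset B|A^*|^2$. Then: since $BA$ is closed and $(BA)^*=A^*B$, one has $BA=(A^*B)^*$, which by Lemma 11 of \cite{CG} is equivalent to $(|A^*|B)^*=B|A^*|$, so $B|A^*|$ is closed; Corollary 1 of \cite{DevNussbaum-von-Neumann} upgrades $B|A^*|(B|A^*|)^*=B|A^*|^2B\subset B^2|A^*|^2$ to an equality, making $B^2|A^*|^2$ self-adjoint, hence closed, after which Proposition 3.7 of \cite{Dehimi-Mortad-INVERT} shows $B|A^*|^2$ is closed; a second application of \cite{DevNussbaum-von-Neumann} shows $B|A^*|^2$ is normal, so Theorem 1.1 of \cite{Gustafson-Mortad-II} yields that $B|A^*|^2=|A^*|^2B$ is self-adjoint, and Theorem 10 of \cite{Bernau JAusMS-1968-square root} extracts the square root: $B|A^*|=|A^*|B$. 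The conclusion then follows from precisely the domain identity you isolated, but run through $|A^*|$:
\[D(A^*)=D(|A^*|)=D(B|A^*|)=D(|A^*|B)=D(A^*B)=D[(BA)^*]=D(BA)=D(A),\]
so $A$ is self-adjoint. Your instinct that $D(A)=D(A^*B)$ is the finishing move was sound; what is missing --- and what constitutes essentially the whole of the paper's proof --- is the chain of closedness, maximality, and normality arguments establishing the commutation $B|A^*|=|A^*|B$, which, unlike $BA\subset AB$, survives the sign obstruction.
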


We are now ready to show Theorem \ref{BDM-THM-BA self-adjoint}.

\begin{proof}First, recall that since $BA$ is normal, $BA$ is closed
and $D(BA)=D[(BA)^*]$.

Write
\[A(BA)\subset A^*BA=(BA)^*A.\]
Since $BA$ is normal and $D(BA)=D(A)$, Theorem
\ref{Fug-Put-MORTAD-PAMS-2003} is applicable and it gives
\[A(BA)^*\subset (BA)^{**}A=\overline{BA}A=BA^2,\]
i.e. $AA^*B\subset BA^2$. Since $A$ is symmetric, we may push the
previous inclusion to further obtain $AA^*B\subset BAA^*$, that is,
$|A^*|^2B\subset B|A^*|^2$.

Next, we claim that $B|A^*|$ is closed too. To see that, observe
that as $B\in B(H)$, then $(BA)^*=A^*B$. Hence
$\overline{BA}=(A^*B)^*$ or $BA=(A^*B)^*$ because $BA$ is already
closed. By Lemma 11 in \cite{CG}, the last equation is equivalent to
$(|A^*|B)^*=B|A^*|$ which gives the closedness of $B|A^*|$ as
needed.

Now, we have
\[B|A^*|(B|A^*|)^*=B|A^*|^2B\subset B^2|A^*|^2.\]
It then follows by Corollary 1 in \cite{DevNussbaum-von-Neumann}
that
\[B|A^*|(B|A^*|)^*=B^2|A^*|^2\]
for $B|A^*|(B|A^*|)^*$, $B^2$, and $|A^*|^2$ are all self-adjoint.
The self-adjointness of $B|A^*|(B|A^*|)^*$ also implies that
$B^2|A^*|^2$ is self-adjoint as well, i.e.
\[B^2|A^*|^2=(B^2|A^*|^2)^*=|A^*|^2B^2.\]
In particular, $B^2|A^*|^2$ is closed. So, Proposition 3.7 in
\cite{Dehimi-Mortad-INVERT} implies that $B|A^*|^2$ is closed.

The next step is to show that $B|A^*|^2$ is normal. As
$|A^*|^2B\subset B|A^*|^2$, it ensues that
\[B|A^*|^2(B|A^*|^2)^*=B|A^*|^4B\subset B^2|A^*|^4\]
and
\[(B|A^*|^2)^*B|A^*|^2=|A^*|^2B^2|A^*|^2\subset B^2|A^*|^4.\]
Since $B|A^*|^2(B|A^*|^2)^*$, $(B|A^*|^2)^*B|A^*|^2$, $B^2$, and
$|A^*|^2$ are all self-adjoint, Corollary 1 in
\cite{DevNussbaum-von-Neumann} yields
\[B|A^*|^2(B|A^*|^2)^*=(B|A^*|^2)^*B|A^*|^2~~(=B^2|A^*|^4).\]
Therefore, $B|A^*|^2$ is normal. So, since $B\in B(H)$ is
self-adjoint and $|A^*|^2$ is self-adjoint and positive, it follows
by Theorem 1.1 in \cite{Gustafson-Mortad-II} that $B|A^*|^2$ is
self-adjoint and $B|A^*|^2=|A^*|^2B$.

By applying Theorem 10 in \cite{Bernau JAusMS-1968-square root}, it
is seen that
\[B|A^*|=|A^*|B\]
due to the self-adjointness and the positivity of $|A^*|$.

We now have all the necessary tools to establish the
self-adjointness of $A$. Indeed,
\begin{align*}
D(A^*)=D(|A^*|)=D(B|A^*|)&=D(|A^*|B)\\&=D(A^*B)=D[(BA)^*]=D(BA)=D(A).
\end{align*}
Thus, $A$ is self-adjoint as it is already symmetric.

Finally, when $B\in B(H)$ is positive and since $A$ is self-adjoint,
$(BA)^*=AB$ is normal. By Theorem \ref{BDM-THM-AB self-adjoint},
$AB$ is self-adjoint or $(BA)^*$ is self-adjoint. In other words,
\[BA=(BA)^*=AB,\]
and this marks the end of the proof.
\end{proof}

Generalizations to weaker classes than normality vary. Notice in
passing that in \cite{Dehimi-Mortad-INVERT}, the self-adjointness of
$BA$ was established for a positive $B\in B(H)$ and an unbounded
self-adjoint $A$ such that $BA$ is hyponormal and
$\sigma(BA)\neq\C$. The next result is of the same kind.

\begin{pro}Let $B\in B(H)$ be positive and let $A$ be a
densely defined closed symmetric operator. If $(AB)^*$ is subnormal
or if $BA^*$ is closed and subnormal, then $BA\subset AB$.

Moreover, if $A$ is self-adjoint, then $AB$ is self-adjoint.
Besides, $AB=\overline{BA}$.
\end{pro}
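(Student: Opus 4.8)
The plan is to follow the proof of Theorem~\ref{BDM-THM-AB self-adjoint} almost line by line, substituting a Fuglede--Putnam theorem valid for subnormal operators for the normal one, and to dispose of the second hypothesis by showing it is a disguised instance of the first. Throughout I would use that $B\in B(H)$ is self-adjoint, so that $(BA)^*=A^*B$ and $BA^*\subset(AB)^*$, and that $AB$ is closed because $A$ is closed and $B$ is bounded.

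First I would merge the two hypotheses. Since $B\in B(H)$, we have $(BA^*)^*=A^{**}B=AB$ (using $A^{**}=A$), whence
\[
(AB)^*=(BA^*)^{**}=\overline{BA^*}=BA^*
\]
as soon as $BA^*$ is closed. Thus, when $BA^*$ is closed and subnormal, $(AB)^*$ is itself subnormal, and it suffices to treat the single case where $(AB)^*$ is subnormal.

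Next I would reproduce the opening computation of Theorem~\ref{BDM-THM-AB self-adjoint}, namely
\[
B(AB)=BAB\subset BA^*B\subset(AB)^*B,
\]
so that $B$ intertwines $AB$ and $(AB)^*$. The decisive step is to feed this into a Fuglede--Putnam theorem for subnormal operators: since $(AB)^*$ is subnormal and $(AB)^{**}=AB$, such a theorem applied to $B(AB)\subset(AB)^*B$ should yield $B(AB)^*\subset(AB)^{**}B=AB^2$, and therefore
\[
B^2A\subset B^2A^*\subset B(AB)^*\subset AB^2.
\]
Because $B^2\geq0$ is bounded with $\sqrt{B^2}=B$ and $A$ is closed, Lemma~\ref{PAMS 03 LEMMA squr rt BA subset AB } applied to the positive operator $B^2$ then upgrades $B^2A\subset AB^2$ to the desired $BA\subset AB$. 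Finally, if $A=A^*$, then $A$ and $B$ are self-adjoint with $B$ bounded and $BA\subset AB$, so Lemma~\ref{product}(i) gives immediately that $AB$ is self-adjoint and $AB=\overline{BA}$.

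The hard part will be the subnormal Fuglede--Putnam step, since the normal theorems~\ref{Fug-Put UNBD A BD} and~\ref{Fug-Put-MORTAD-PAMS-2003} do not apply verbatim; subnormal operators are only hyponormal, and the bare inequality $\|ABx\|\leq\|(AB)^*x\|$ coming from hyponormality does not by itself produce the reverse intertwining $B(AB)^*\subset AB^2$. I would either invoke a ready unbounded Fuglede--Putnam theorem for subnormal operators or, lacking one, pass to a minimal normal extension $\mathcal N$ of $(AB)^*$ on a larger space $K\supset H$, lift the intertwining relation through the inclusion $H\subset K$, apply the normal Fuglede--Putnam theorem there, and compress back to $H$; the delicate points being the bookkeeping of the unbounded domains and the verification that the compression indeed returns the inclusion $B(AB)^*\subset AB^2$ on $H$.
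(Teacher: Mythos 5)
Your proposal is correct and, in all essentials, reproduces the paper's proof. The ``ready unbounded Fuglede--Putnam theorem for subnormal operators'' you hope for does exist and is exactly what the paper invokes: Theorem 4.2 of Stochel \cite{STO-asymm-Fuglede-Putnam-PAMS-2001}, used precisely in the form you need, turning $B(AB)^{**}=BAB\subset BA^*B\subset (AB)^*B$, with $(AB)^*$ subnormal, into $B(AB)^*\subset (AB)^{**}B=AB^2$. So your fallback plan via a minimal normal extension --- which you rightly flag as delicate, and which is in substance how such asymmetric theorems are proved --- can simply be replaced by the citation; note that the symmetric hyponormality inequality is indeed insufficient here, as you observe, which is why the asymmetric subnormal version is the right tool. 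From that point on your steps coincide with the paper's: $B^2A\subset B^2A^*\subset B(AB)^*\subset AB^2$, then Lemma \ref{PAMS 03 LEMMA squr rt BA subset AB } applied to the positive operator $B^2$ (with $f$ the square-root function and $\sqrt{B^2}=B$ since $B\geq 0$) yields $BA\subset AB$, and Lemma \ref{product} settles the case $A=A^*$.

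The one point where you genuinely diverge from the paper is the treatment of the second hypothesis. You observe that $(BA^*)^*=A^{**}B=AB$ (legitimate, since $A^*$ is densely defined and $B\in B(H)$), hence $(AB)^*=(BA^*)^{**}=\overline{BA^*}=BA^*$ as soon as $BA^*$ is closed, so the second case collapses into the first. The paper instead runs Stochel's theorem a second time, from $B(BA^*)^*=BAB\subset BA^*B$ to $B(BA^*)\subset (BA^*)^*B=AB^2$. Your reduction is a small but real economy, and it has the merit of making transparent why closedness of $BA^*$ is assumed at all: it is exactly the condition under which $BA^*$ coincides with $(AB)^*$.
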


\begin{proof}The proof relies on a version of the Fuglede-Putnam theorem
obtained by J. Stochel in \cite{STO-asymm-Fuglede-Putnam-PAMS-2001}.
Write
\[B[(AB)^*]^*=B(AB)^{**}=BAB\subset BA^*B\subset (AB)^*B.\]
Since $(AB)^*$ is subnormal, Theorem 4.2 in
\cite{STO-asymm-Fuglede-Putnam-PAMS-2001} yields
\[B^2A\subset B^2A^*\subset B(AB)^*\subset (AB)^{**}B=AB^2.\]

The same inclusion is obtained in the event of the subnormality of
$BA^*$. Indeed, write
\[B(BA^*)^*=BAB\subset BA^*B.\]
Applying again Theorem 4.2 in
\cite{STO-asymm-Fuglede-Putnam-PAMS-2001} gives
\[B(BA^*)\subset (BA^*)^*B=AB^2.\]
Therefore, and as above, we obtain $B^2A\subset AB^2$.

Now, since $B\geq 0$ and $A$ is closed, it follows that $BA\subset
AB$.

Finally, when $A$ is self-adjoint, Lemma \ref{product} implies that
$AB$ is self-adjoint and $AB=\overline{BA}$, as needed.
\end{proof}

There are still more cases to investigate. As is known, if $N\in
B(H)$ is such that $N^2$ is normal, then $N$ need not be normal (cf.
\cite{Mortad-square-root-normal}). The same applies for the class of
self-adjoint operators.

The first attempted generalization is the following: Let $A,B$ be
two self-adjoint operators, where $B$ is positive, and such that
$(AB)^n$ is normal for some $n\in\N$ such that $n\geq2$. Does it
follow that $AB$ is self-adjoint?

The answer is negative even when $A$ and $B$ are $2\times 2$
matrices. This is seen next:

\begin{exa}Take
\[A=\left(
      \begin{array}{cc}
        0 & 1 \\
        1 & 0 \\
      \end{array}
    \right)
 \text{ and }B=\left(
                 \begin{array}{cc}
                   0 & 0 \\
                   0 & 1 \\
                 \end{array}
               \right).
 \]
Then $A$ is self-adjoint and $B$ is positive (it is even an
orthogonal projection). Also,
\[AB=\left(
      \begin{array}{cc}
        0 & 1 \\
        0 & 0 \\
      \end{array}
    \right)\text{ whilst }(AB)^n=\left(
                 \begin{array}{cc}
                   0 & 0 \\
                   0 & 0 \\
                 \end{array}
               \right)\]
    for all $n\geq2$. In other words, $AB$ is not self-adjoint while all
    $(AB)^n$, $n\geq2$, are patently self-adjoint.
\end{exa}

Let us pass to other possible generalizations.

\begin{pro}
Let $B\in B(H)$ be positive and let $A$ be a closed and symmetric
operator. Assume $AB^n$ is normal for a certain positive integer
$n\in\N$. Then
\begin{enumerate}
  \item $BA\subset AB$ (hence $BA$ is symmetric).
  \item If it is further assumed that $B$ is invertible, then $A$ is self-adjoint. Besides, all of
  $AB^{1/n}$ and $B^{1/n}A$ are self-adjoint for all $n\geq 1$.
\end{enumerate}
\end{pro}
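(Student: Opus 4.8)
The plan is to reduce the whole statement to the already-proved Theorem~\ref{BDM-THM-AB self-adjoint} by regarding $B^n$, rather than $B$, as the bounded positive factor. Since $B\in B(H)$ is positive, the operator $B^n$ is again positive and everywhere defined, and by hypothesis $AB^n$ is normal. Hence the pair $(A,B^n)$ falls under Theorem~\ref{BDM-THM-AB self-adjoint}, which at once delivers $B^nA\subset AB^n$ as well as the self-adjointness of $AB^n$ and of $\overline{B^nA}$.

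For part~(1) the only remaining task is to pass from the $n$-th power down to $B$ itself. I would do this through Lemma~\ref{PAMS 03 LEMMA squr rt BA subset AB }: as $A$ is closed, $B^n$ is self-adjoint, and $B^nA\subset AB^n$, every continuous function of $B^n$ commutes with $A$. Applying this to $t\mapsto t^{1/n}$ on $\sigma(B^n)\subset[0,\infty)$ and using $(B^n)^{1/n}=B$ (uniqueness of the positive $n$-th root) yields precisely $BA\subset AB$. The symmetry of $BA$ then comes for free: since $B$ is bounded and self-adjoint, $(BA)^*=A^*B\supset AB\supset BA$, so that $BA\subset(BA)^*$.

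For part~(2), assume in addition that $B$ is invertible, so that $B^n$ is invertible as well. Applying Corollary~\ref{23/02/2022 ...COR} to the pair $(A,B^n)$---whose hypotheses were checked above---forces $A$ to be self-adjoint. With $A$ now self-adjoint and $BA\subset AB$ in hand, Lemma~\ref{PAMS 03 LEMMA squr rt BA subset AB } again lets me commute $A$ with continuous functions of $B$; choosing $t\mapsto t^{1/m}$ gives $B^{1/m}A\subset AB^{1/m}$ for every positive integer $m$. Since $A$ is self-adjoint and $B^{1/m}\in B(H)$ is self-adjoint, Lemma~\ref{product} shows that $AB^{1/m}$ is self-adjoint and $AB^{1/m}=\overline{B^{1/m}A}$. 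Finally, the invertibility of $B$ makes $B^{1/m}$ invertible, so $B^{1/m}A$ is closed (a bounded invertible operator composed with a closed operator is closed); therefore $B^{1/m}A=\overline{B^{1/m}A}=AB^{1/m}$ is self-adjoint too.

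The step I expect to be the crux is the descent in part~(1) from the commutation relation for $B^n$ to that for $B$, and the clean resolution is the $n$-th root functional calculus supplied by Lemma~\ref{PAMS 03 LEMMA squr rt BA subset AB }, which is available precisely because $B^n$ is positive and $A$ need only be closed. A second, smaller point requiring care is in the last assertion of part~(2): one must argue the closedness of $B^{1/m}A$ itself (not merely of $AB^{1/m}$) in order to turn $AB^{1/m}=\overline{B^{1/m}A}$ into the genuine equality $B^{1/m}A=AB^{1/m}$, and this is exactly the place where invertibility of $B$ is indispensable.
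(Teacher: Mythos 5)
Your proposal is correct and follows essentially the same path as the paper's own proof: Theorem \ref{BDM-THM-AB self-adjoint} applied to the pair $(A,B^n)$, the functional calculus of Lemma \ref{PAMS 03 LEMMA squr rt BA subset AB } with $t\mapsto t^{1/n}$ to pass from $B^nA\subset AB^n$ to $BA\subset AB$, and Corollary \ref{23/02/2022 ...COR} (via the invertibility of $B^n$) for the self-adjointness of $A$. You merely make explicit some details the paper leaves tacit, notably the symmetry of $BA$ and the closedness of $B^{1/m}A$ obtained from the bounded invertibility of $B^{1/m}$, which upgrades $\overline{B^{1/m}A}=AB^{1/m}$ to the genuine equality $B^{1/m}A=AB^{1/m}$.
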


\begin{proof}\hfill
\begin{enumerate}
  \item Since $B^n$ is positive for all $n$ and $AB^n$ is normal, it follows by Theorem \ref{BDM-THM-AB self-adjoint} that $AB^n$ is
  self-adjoint and $B^nA\subset AB^n$. By Lemma \ref{PAMS 03 LEMMA squr rt BA subset AB
  }, it is seen that $BA\subset AB$.
  \item Since $AB^n$ is normal and $B^nA$ is closed (as $B^n$ is
  invertible), Corollary \ref{23/02/2022 ...COR} yields the
  self-adjointness of $A$.

  Finally, since $BA\subset AB$ and $B$ is positive, it follows that
  $B^{1/n}A\subset AB^{1/n}$, from which we derive the
  self-adjointness of $AB^{1/n}$ and $\overline{B^{1/n}A}=B^{1/n}A$,
  as suggested.
\end{enumerate}
\end{proof}

Similarly, we have:

\begin{pro}\label{23/01/2022}
Let $B\in B(H)$ be positive and let $A$ be a closed and symmetric
operator. Assume that $B^nA$ is normal for some positive integer
$n\in\N$. Then $A$ and $BA$ are self-adjoint, and $BA=AB$.
\end{pro}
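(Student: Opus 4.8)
The plan is to collapse the whole problem onto Theorem \ref{BDM-THM-BA self-adjoint} by regarding $B^n$ as a single positive bounded operator, and then to descend from $B^n$ back to $B$ by means of the functional-calculus lemma. First I would set $C=B^n$. Since $B\geq 0$ we have $C\in B(H)$ positive, and by hypothesis $CA=B^nA$ is normal. Applying Theorem \ref{BDM-THM-BA self-adjoint} to the pair $(C,A)$ then yields at once that $A$ is self-adjoint, and, because $C$ is positive, that $CA$ is self-adjoint with the genuine operator equality $B^nA=AB^n$.

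Next I would manufacture the inclusion $BA\subset AB$. From $B^nA\subset AB^n$, the closedness of $A$, and $B^n\geq 0$, Lemma \ref{PAMS 03 LEMMA squr rt BA subset AB } applied to the continuous function $f(t)=t^{1/n}$ on $\sigma(B^n)\subset[0,\infty)$ gives $f(B^n)A\subset Af(B^n)$; since $f(B^n)=B$ for a positive $B$, this reads $BA\subset AB$. As $A$ is now self-adjoint and $B$ is positive, Lemma \ref{product} upgrades this to: $AB$ is self-adjoint and $AB=\overline{BA}$.

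The one genuine obstacle is that a priori $BA$ sits strictly inside $AB$, and a product of a bounded operator on the \emph{left} with a closed operator need not be closed, so I cannot immediately conclude $BA=AB$ (equivalently, that $BA$ is self-adjoint); moreover $B$ is not assumed invertible, so I cannot just invert it. To close this gap I would compare domains. The inclusion $BA\subset AB$ already forces $B$ to map $D(A)$ into itself, hence $B^{n-1}$ maps $D(A)$ into itself as well. Now take any $x\in D(AB)$, i.e. $Bx\in D(A)$; applying $B^{n-1}$ gives $B^nx=B^{n-1}(Bx)\in D(A)$, that is, $x\in D(AB^n)$. But the equality $B^nA=AB^n$ means $D(AB^n)=D(B^nA)=D(A)$, whence $x\in D(A)$. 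Thus $D(AB)\subset D(A)=D(BA)$, and combined with $BA\subset AB$ this gives $D(AB)=D(BA)$ and therefore $BA=AB$.

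Finally, since $BA=AB=\overline{BA}$ and $AB$ is self-adjoint by Lemma \ref{product}, the operator $BA$ is closed and self-adjoint, $A$ is self-adjoint, and $BA=AB$, which is exactly the assertion. The step deserving care is the domain comparison in the third paragraph, where it is crucial that Theorem \ref{BDM-THM-BA self-adjoint} delivers the full equality $B^nA=AB^n$ rather than a mere inclusion; this is precisely what furnishes $D(AB^n)=D(A)$ and lets the argument go through without any invertibility hypothesis on $B$.
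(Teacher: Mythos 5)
Your proof is correct. The first two-thirds coincide with the paper's own argument: like the paper, you apply Theorem \ref{BDM-THM-BA self-adjoint} to the positive operator $B^n$ to get the self-adjointness of $A$ and the genuine equality $B^nA=AB^n$, then descend to $BA\subset AB$ via Lemma \ref{PAMS 03 LEMMA squr rt BA subset AB } with $f(t)=t^{1/n}$, and invoke Lemma \ref{product} to get $AB$ self-adjoint with $AB=\overline{BA}$. Where you genuinely diverge is the closing step. The paper gets $BA=AB$ by first proving that $BA$ is \emph{closed}: it uses the auxiliary Lemma \ref{BnA closed implies BA closed LEMMA} (a sequential graph argument showing that closedness of $B^nA$, together with closability of $BA$, forces $BA$ to be closed), whence $BA=\overline{BA}=AB$. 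You instead run a purely algebraic domain comparison: $BA\subset AB$ forces $B$, hence $B^{n-1}$, to leave $D(A)$ invariant, so any $x$ with $Bx\in D(A)$ satisfies $B^nx\in D(A)$, i.e. $x\in D(AB^n)=D(B^nA)=D(A)$; thus $D(AB)\subset D(BA)$ and $BA=AB$ outright, with closedness of $BA$ falling out a posteriori since $AB$ is closed. Your observation that the full equality $B^nA=AB^n$ (rather than a mere inclusion) is what supplies $D(AB^n)=D(A)$ is exactly the right pressure point. The trade-off: the paper's lemma is more general (it needs only that $B^nA$ is closed and $BA$ closable, with no commutation hypothesis), whereas your argument is more elementary and self-contained for this proposition, dispensing with the sequential lemma entirely by exploiting the commutation equality you already have in hand.
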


One of the tools to prove this result is:

\begin{lem}\label{BnA closed implies BA closed LEMMA}(Cf. Proposition 3.7
in \cite{Dehimi-Mortad-INVERT}) Let $B\in B(H)$ and let $A$ be an
arbitrary operator such that $B^nA$ is closed for some integer
$n\geq2$. Suppose further that $BA$ is closable. Then $BA$ is
closed.
\end{lem}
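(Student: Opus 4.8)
The plan is to show directly that the graph of $BA$ is closed. I would first record a domain observation: since $B\in B(H)$ is everywhere defined, so is $B^n$, whence
\[D(B^nA)=D(BA)=D(A),\]
and on this common domain one has the factorisation $B^nA=B^{n-1}(BA)$.

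Next, I would pick an arbitrary pair $(x,y)$ lying in the closure of the graph of $BA$, so that there is a sequence $(x_k)\subset D(A)$ with $x_k\to x$ and $BAx_k\to y$. Applying the bounded operator $B^{n-1}$ and exploiting its continuity yields
\[B^nAx_k=B^{n-1}(BAx_k)\longrightarrow B^{n-1}y.\]
Since $B^nA$ is closed and $x_k\to x$, this forces $x\in D(B^nA)=D(A)$; in particular $x\in D(BA)$, and $B^nAx=B^{n-1}y$, that is, $B^{n-1}(BAx)=B^{n-1}y$.

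The only step that remains is to promote the identity $B^{n-1}(BAx)=B^{n-1}y$ to the genuine equality $BAx=y$, and this is where I expect the sole real obstacle to sit: because $B^{n-1}$ may well fail to be injective, the closedness of $B^nA$ pins down the domain but cannot by itself distinguish $BAx$ from $y$ modulo $\ker B^{n-1}$. The closability hypothesis is designed precisely to bridge this gap. Indeed, now that $x\in D(BA)$ is known, the pair $(x,BAx)$ belongs to the graph of $BA$, and hence to its closure; but $(x,y)$ belongs to that closure as well, and since $BA$ is closable its closure is single-valued, forcing $y=BAx$. Consequently every element of the closure of the graph of $BA$ already lies in the graph of $BA$ itself, so the graph is closed and $BA$ is closed, as claimed.
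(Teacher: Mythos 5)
Your proof is correct and follows essentially the same route as the paper's: you apply the bounded operator $B^{n-1}$ to a sequence $(x_k,BAx_k)$ converging to $(x,y)$, use the closedness of $B^nA$ to conclude $x\in D(B^nA)=D(A)$, and then invoke closability of $BA$ to identify $y$ with $BAx$. Your explicit discussion of the possible non-injectivity of $B^{n-1}$ simply makes transparent what the paper's step $BAx=\overline{BA}x=\lim_{p\to\infty}BAx_p=y$ uses implicitly, namely that the closure $\overline{BA}$ is single-valued.
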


\begin{proof}Let $(x_p)$ be in $D(B^nA)$ and such that $x_p\to x$ and $BAx_p\to
y$. Since $B^{n-1}\in B(H)$, $B^nAx_p\to B^{n-1}y$. Since $B^nA$ is
closed, we obtain $x\in D(B^nA)=D(A)$. Since $BA\subset
\overline{BA}$ and $x\in D(BA)$, we have
\[BAx=\overline{BA}x=\lim_{p\to \infty}BAx_p=y\]
by the definition of the closure of an operator. We have therefore
shown that $BA$ is closed, as wished.
\end{proof}

Now, we show Proposition \ref{23/01/2022}.

\begin{proof} Since $B^n$ is positive, Theorem \ref{BDM-THM-BA
self-adjoint} gives both the self-adjointness of $A$ and $B^nA$.
Moreover, $B^nA=AB^n$. Using Lemma \ref{PAMS 03 LEMMA squr rt BA
subset AB } or else, we get $BA\subset AB$ (only the inclusion
suffices to finish the proof). The equation $B^nA=AB^n$ contains the
closedness of $B^nA$ which, by a glance at Lemma \ref{BnA closed
implies BA closed LEMMA}, yields $BA=AB$ by consulting Lemma
\ref{product}.
\end{proof}

\end{document}